\theoremstyle{definition}\newtheorem{Def}{Definition}
\theoremstyle{plain}\newtheorem{Th}{Theorem}
\theoremstyle{remark}\newtheorem{Rem}{Fact}
\theoremstyle{plain}\newtheorem{Le}{Lemma}
\theoremstyle{plain}
\newcommand{\Cii}[1]{_{{}_{\scriptstyle #1}}}
\newcommand{\df}{\buildrel\mathrm{def}\over=}
\DeclareMathOperator{\supp}{supp}
\newcommand{\dltn}{\Delta}
\DeclareMathOperator{\sign}{sign}
\newcommand{\mean}{\mathbb{E}}
\author{Nikolay N. Osipov}
\title[Rubio de Francia inequality for the Walsh system]{Littlewood--Paley--Rubio de Francia inequality for the Walsh system}
\thanks{This work was carried out during the tenure of an ERCIM ``Alain Bensoussan'' Fellowship Programme}
\thanks{The author is also supported by RFBR (grant no. 14-01-31163 and no. 14-01-00198).}
\address{St.~Petersburg Department of Steklov Mathematical Institute RAS, Fontanka 27, St.~Petersburg, Russia}
\email{nicknick AT pdmi DOT ras DOT ru}
\address{Norwegian University of Science and Technology (NTNU), IME Faculty, Dep. of Math. Sci., Alfred Getz' vei 1, Trondheim, Norway}
\email{nikolai DOT osipov AT math DOT ntnu DOT no}
\keywords{Calder\'on--Zygmund operator, martingales}
\begin{document}
\begin{abstract}
	Rubio de Francia proved the one-sided Littlewood--Paley inequality for arbitrary intervals in $L^p$, $2\le p<\infty$.
	In this article, such an inequality is proved for the Walsh system.
\end{abstract}

\maketitle
\setcounter{secnumdepth}{1}
\section{Formulation of the result}
First, we make some agreement about our notation. From now on, by the space~$L^p$ we mean the space $L^p([0,1])$. 
Also, by $L^p(l^2)$ we mean 
$L^p([0,1], l^2)$ (i.e., the space of $l^2$-valued functions on the interval $[0,1]$).

Let $I_m$ be mutually disjoint intervals in $\mathbb{Z}$ (here and below, we assume that $m$ runs over some finite or countable set). 
In 1983, Rubio de Francia proved (see~\cite{Ru}) that
\begin{equation}\label{RFEstim}
	\Big\|\Big(\sum_m \big|(\widehat{f}\,\,\mathds{1}_{I_m})^{\vee}\big|^2\Big)^{1/2}\Big\|_{L^p} \le C_p \|f\|\Cii{L^p},\quad 2 \le p \le \infty,
\end{equation}
where the constant $C_p$ does not depend on the intervals~$I_m$ or the function~$f$. It is worth noting that he considered the whole line $\mathbb{R}$ rather than the interval $[0,1]$ (so $I_m$ were intervals in $\mathbb{R}$, not in $\mathbb{Z}$), but this fact did not play a significant role in his considerations.

By duality, estimate~\eqref{RFEstim} is equivalent to the following:
\begin{equation}\label{DualEstim}
	\Big\|\sum_{m}f_{m}\Big\|_{L^p} \le
	C_p\big\|\{f_m\}\big\|_{L^p(l^2)},\quad 1<p \le 2,
\end{equation}
where $\{f_m\}$ is a sequence of functions such that $\supp{\widehat{f}_{m}} \subset I_{m}$. In fact, it is already known that 
estimate~\eqref{DualEstim} remains true for $p \in (0, 1]$ (see~\cite{Bo} for $p=1$ and~\cite{KiPa} for all $p\in(0,1]$).

Our goal is to prove an analogue of~\eqref{DualEstim} for the situation where we use the Walsh system instead of the exponential functions.
We give the corresponding definition.
\begin{Def}\label{DefOfWalsh}
\emph{The Walsh system} $\{w_n\}\Cii{n\in\mathbb{Z}_+}$ consists of step functions on the interval $[0,1]$ that are defined as follows. 
First, we set $w_0 \equiv 1$. Next, for any index $n>0$ we consider its dyadic decomposition $n = 2^{k_1} + \cdots + 2^{k_s}$, 
$k_1 >k_2>\cdots> k_s \ge 0$, and set
$$
	w_n(x) \df \prod_{i=1}^s r_{k_i+1}(x),
$$
where $r_k$ are the Rademacher functions, that is $r_k(x) = \sign\sin 2^k\pi x$.
\end{Def}
The Walsh functions form an orthonormal basis in $L^2$ (see, e.g., \cite[IV.5]{KaSa}). In the next section, we will discuss their properties in more detail. Now we present  
the corresponding analogue of Rubio de Francia's result.
\begin{Th}\label{th1}
	Let $I_m$ be mutually disjoint intervals in~$\mathbb{Z}_+$. Let $f_m$ be functions such that
	$$
		f_m = \sum_{n\in I_m} (f_m, w_n)\, w_n.
	$$
	If $1<p\le 2$\textup, then
	$$
		\Big\|\sum_m f_m\Big\|_{L^{p}} \le C_p \big\|\{f_m\}\big\|_{L^p(l^2)},
	$$
	where $C_p$ does not depend on the collections $\{I_m\}$ and~$\{f_m\}$.
\end{Th}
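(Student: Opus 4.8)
The plan is to dualize and prove the equivalent \emph{direct} estimate
\[
\Big\|\Big(\sum_m|S_{I_m}f|^2\Big)^{1/2}\Big\|_{L^p}\le C_p\,\|f\|_{L^p},\qquad 2\le p<\infty,
\]
where $S_I f\df\sum_{n\in I}(f,w_n)\,w_n$ is the Walsh projection onto the frequencies in $I$; the theorem then follows by the same duality that turns~\eqref{RFEstim} into~\eqref{DualEstim}, since $S_{I_m}f_m=f_m$ for the functions in the statement. The case $p=2$ is immediate: the ranges of the $S_{I_m}$ are mutually orthogonal because the $I_m$ are disjoint, so the left-hand side is at most $\|f\|_{L^2}$ by Bessel's inequality. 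The whole difficulty is to upgrade this $L^2$ bound to $L^p$ for $p>2$.

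To do this I would work within Calder\'on--Zygmund theory on the Walsh--Paley group, i.e.\ on $[0,1]$ equipped with dyadic addition $\oplus$ (bitwise XOR of binary expansions) and the dyadic filtration $\{\mathcal F_k\}$. The identity $w_n(x)w_n(y)=w_n(x\oplus y)$ makes each $S_I$ a convolution operator on this group, $S_I f(x)=\int_0^1\mathcal D_I(x\oplus y)\,f(y)\,dy$, with Walsh--Dirichlet kernel $\mathcal D_I=\sum_{n\in I}w_n$; in particular $\mathcal D_{2^k}=2^k\mathds{1}_{[0,2^{-k})}$, so $S_{[0,2^k)}=\mean(\,\cdot\mid\mathcal F_k)$ and dyadic blocks are genuine martingale projections. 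The operator $Tf\df\{S_{I_m}f\}_m$ is then a vector ($l^2$)-valued convolution operator with kernel $\{\mathcal D_{I_m}\}_m$, bounded $L^2\to L^2(l^2)$ by the orthogonality above. I would then reach $2<p<\infty$ by proving the endpoint bound $T\colon L^\infty\to\BMO(l^2)$ and interpolating with the $L^2$ estimate.

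The $\BMO$ bound splits, over each dyadic interval $Q$, into a local and a far part. The local part is handled by applying the $L^2$ boundedness of $T$ to $f\mathds{1}_Q$, costing nothing beyond the $p=2$ case. The far part reduces, as usual, to a \emph{vector-valued H\"ormander condition} for the kernel: with $|Q|=2^{-j}$ and $h=x\oplus x'$ ranging over $[0,2^{-j})$,
\[
\int_{\rho(u,0)\ge 2^{-j+1}}\Big(\sum_m\big|\mathcal D_{I_m}(u)-\mathcal D_{I_m}(u\oplus h)\big|^2\Big)^{1/2}du\le C
\]
uniformly in $h$ and, crucially, in the family $\{I_m\}$. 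Here the Walsh structure helps decisively: since $w_n(h)=1$ whenever $n<2^j$, only high frequencies survive the difference,
\[
\mathcal D_{I_m}(u)-\mathcal D_{I_m}(u\oplus h)=\sum_{n\in I_m,\ n\ge 2^j}\big(1-w_n(h)\big)\,w_n(u),
\]
and on each dyadic annulus $\{\rho(u,0)=2^{-i}\}$ with $i<j$, the phases $w_n(h)$ within one dyadic column of frequencies collapse to a single sign, while every \emph{full} column contributes nothing (its Dirichlet kernel $\mathcal D_{2^{i+1}}$ vanishes off $[0,2^{-i-1})$). Only the at most two boundary columns of each $I_m$ per scale remain.

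The main obstacle is exactly this last point. After the reductions one is left with a sum, over all $m$ and all scales, of contributions localized at the two endpoints of each interval $I_m$, and it must be bounded \emph{independently of the number and location of the intervals}. This is where the disjointness of the $I_m$ has to be used in an essential way: a term-by-term triangle-inequality estimate loses a factor that grows with the number of intervals or of scales, precisely the logarithmic loss that the Rubio de Francia inequality is designed to avoid. I expect the heart of the argument to be a careful accounting of these endpoint/boundary-column terms, exploiting that disjoint intervals can meet a given dyadic column only through their endpoints, so that the endpoint contributions are governed by an $L^2$ (orthogonality) sum rather than by their cardinality. Granting the H\"ormander condition, John--Nirenberg gives $T\colon L^\infty\to\BMO(l^2)$, interpolation with $L^2\to L^2(l^2)$ yields the direct estimate for every $p\in(2,\infty)$, and duality delivers the theorem for $p\in(1,2]$.
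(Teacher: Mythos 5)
Your reduction to the direct square-function estimate, and the $p=2$ case, are fine; but the proof stops exactly at the decisive point. The vector-valued H\"ormander condition is not proved --- you yourself call it ``the main obstacle'' and only describe what you \emph{expect} the accounting of boundary-column terms to give. Worse, the condition you propose to verify is \emph{false}, already for a single interval and for the scalar kernel, so no bookkeeping over the family $\{I_m\}$ can close the gap. Indeed, let $j$ be large, $a\df 2^{j}+\sum_{i<j,\ i\ \mathrm{odd}}2^{i}$, and $I\df[a,2^{j+1})$. Lemma~\ref{intervals} gives $[a,2^{j+1})\dotplus a=\{0\}\cup\bigcup_{i<j,\ i\ \mathrm{even}}\delta_{i+1}$, hence
\[
\mathcal D_I=w_a K,\qquad K\df 1+\sum_{i<j,\ i\ \mathrm{even}}\bigl(D_{2^{i+1}}-D_{2^{i}}\bigr),\qquad D_{2^{k}}\df\sum_{n<2^{k}}w_n=2^{k}\mathds{1}_{[0,2^{-k})}.
\]
Take any $h\in(2^{-j-1},2^{-j})$, which is admissible in your condition. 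Then $r_{j+1}(h)=-1$ while $r_{i+1}(h)=1$ for all $i<j$, so $w_a(h)=-1$; moreover every block $D_{2^{i+1}}-D_{2^{i}}$ with $i<j$ is constant on dyadic intervals of length $2^{-j}$, so $K(u\oplus h)=K(u)$. Therefore
\[
\mathcal D_I(u)-\mathcal D_I(u\oplus h)=\bigl(1-w_a(h)\bigr)\,w_a(u)\,K(u)=2\,w_a(u)\,K(u),
\]
while on each band $[2^{-i-1},2^{-i})$ with $i<j$ even a direct computation yields $K=-\tfrac23\bigl(2^{i}-1\bigr)$. Summing over even $i$ with $2\le i\le j-2$,
\[
\int_{2^{-j+1}}^{1}\bigl|\mathcal D_I(u)-\mathcal D_I(u\oplus h)\bigr|\,du\ \ge\ \sum_{\substack{i\ \mathrm{even}\\ 2\le i\le j-2}}\tfrac43\bigl(2^{i}-1\bigr)\,2^{-i-1}\ \ge\ c\,j
\]
with an absolute constant $c>0$. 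So the H\"ormander integral grows linearly in the number of scales: a sharp Walsh cutoff modulated to high frequency is not a Calder\'on--Zygmund kernel with uniform bounds, exactly as in the trigonometric case. Your heuristic that phases within a boundary column collapse to a single sign is true here (the sign is $-1$), but that is precisely the problem: the factor $1-w_a(h)=2$ then multiplies a kernel whose $L^1$ norm away from the origin is logarithmically large.

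This is why neither Rubio de Francia's original argument nor the paper verifies kernel regularity for sharp cutoffs: Rubio de Francia passes to smooth cutoffs adapted to a Whitney decomposition, while the paper avoids Calder\'on--Zygmund theory in this form altogether. The paper works directly in the dual range $1<p\le2$: Lemma~\ref{intervals} turns each piece of the projection onto $I_m$ into a Walsh modulation of a genuine martingale block, $f_{m,j}=w_{a_m}\dltn_{k}\bigl(w_{a_m}f_{m,j}\bigr)$, and the resulting operator $Gh=\sum w_{a_{j,k}}\dltn_k h_{j,k}$ is bounded by Gundy's weak-type $(1,1)$ theorem for operators on martingales (Theorem~\ref{OperOnMart}) plus Marcinkiewicz interpolation (Lemma~\ref{aboutS}), combined with the martingale square function (Lemma~\ref{aboutR}). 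The decisive difference is that the regularity hypothesis (c) of Theorem~\ref{OperOnMart} is a \emph{support} condition on martingale differences, completely insensitive to multiplication by the unimodular factors $w_{a_{j,k}}$ --- the very modulations that destroy your H\"ormander bound are invisible to it. If you insist on an endpoint $L^\infty\to\BMO$ route, you would need to replicate the Whitney/smooth-cutoff scheme in the Walsh setting, not verify a H\"ormander condition for $\{\mathcal D_{I_m}\}$.
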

The proof of this theorem will be close in spirit to arguments in~\cite{Ru} or~\cite{KiPa}. However there will be some interesting combinatorial considerations that do not occur in the case of the trigonometric basis. On the other hand, some parts of our proof will be much easier due to the discrete nature of the Walsh system.

\section{Preliminaries}
\subsection{Concerning the Walsh system} 
Here we define a certain group operation on~$\mathbb{Z}_+$ and describe its connection with the Walsh functions. 
\begin{Def}\label{DefOfPlus}
Let $a$ and $b$ be numbers in $\mathbb{Z}_+$. Consider their dyadic decompositions
$$
	a = \sum_{k = 0}^{\infty} \theta_k(a)\, 2^{k} \quad\mbox{and}\quad  b = \sum_{k=0}^{\infty} \theta_k(b)\, 2^{k},
$$
where the functions~$\theta_k$ can take the values $0$ or $1$.
Then, we set
$$
	a \dotplus b \,\df\, \sum_{k=0}^{\infty}  \big((\theta_k(a) + \theta_k(b)) \bmod 2\big)\, 2^{k}.
$$
\end{Def}
\begin{Rem}\label{ZIsGroup}
	The set~$\mathbb{Z}_+$, together with the operation $\dotplus$, is an abelian group whose elements are inverse to themselves: 
	$a \dotplus a = 0$, $a \in \mathbb{Z}_+$.
\end{Rem}
\begin{Rem}\label{WIsGroup}
	The Walsh system is an abelian group with respect to multiplication that is isomorphic to the group $\mathbb{Z}_+$ with 
	operation~$\dotplus$. Namely, we have 
	$$
		w_a(x)\, w_b(x) = w_{a\dotplus b}(x)
	$$
	for a.e. $x \in [0,1]$ and for any $a,b \in \mathbb{Z}_+$.
\end{Rem}
This two facts follow directly from Definitions~\ref{DefOfWalsh} and~\ref{DefOfPlus}. A more detailed discussion of the Walsh functions and the operation~$\dotplus$ can be found, for example, \mbox{in~\cite[IV.5]{KaSa}}.

\subsection{Dyadic martingales}
	Let $\mathcal{F}_0 \subset \mathcal{F}_1 \subset \dots$ be the increasing sequence of $\sigma$-algebras on $[0,1]$ where each 
	$\mathcal{F}_k$ is generated by the dyadic subintervals of length~$2^{-k}$.
	We introduce the following notation:
	$$
		\mean_k f\df \mean[f|\mathcal{F}_k]  = \sum_{i=0}^{2^k - 1}  \frac{\mathds{1}_{A_i}}{|A_i|} \int\limits_{A_i} f(x)\,dx,
	$$
	where $f$ is a function in $L^1$ and $A_i = [i\,2^{-k}, (i+1)\,2^{-k}]$. 
\begin{Def}
	Consider a collection $\mathcal{M} = \{M_k\}\Cii{k \in \mathbb{Z}_+}$ of scalar-valued or $l^2$-va\-lu\-ed integrable functions on $[0,1]$.
	We say that $\mathcal{M}$ is \emph{a dyadic martingale} (scalar-valued or $l^2$-va\-lu\-ed, respectively) if each $M_k$ is 
	$\mathcal{F}_k$-measurable and 
	$\mean_{k} M_{k+1} = M_{k}$.
\end{Def}
From now on, by martingales we mean dyadic martingales. 
The general concept of vector-valued martingales (not only dyadic ones) is described in detail, for example, in~\cite{DiUh}.
We will use the following notation:
$$
	\dltn_0\mathcal{M} \df M_0\quad\mbox{and}\quad\dltn_k\mathcal{M} \df M_k-M_{k-1},\; k>0.
$$

The $L^p$-norms for martingales are defined as follows:
$$
	\|\mathcal{M}\|\Cii{L^p} \df \sup_{k} \|M_k\|\Cii{L^p}, \quad 1 \le p < \infty.
$$
If $1<p<\infty$, then each martingale $\mathcal{M} = \{M_k\}$ with $\|\mathcal{M}\|_{L^p} \le \infty$
can be identified with some function $f \in L^p$ and vice versa: the functions $M_k$ have a limit $f$ in $L^p$ such that 
$\|\mathcal{M}\|_{L^p} = \|f\|_{L^p}$ (see, e.g., \cite[V.2]{DiUh}), and, on the other hand, if $f$ is a function in $L^p$, then the sequence 
$\{\mean_k f\}\Cii{k \in \mathbb{Z}_+}$ is a martingale with the same norm. As for the case $p=1$, the condition 
$\|\mathcal{M}\|_{L^1} \le \infty$ is not sufficient for the existence of the $L^1$-limit, but each function $f \in L^1$ can still be treated as 
the martingale $\mathcal{M} = \{\mean_k f\}$ and we have $\|\mathcal{M}\|_{L^1} = \|f\|_{L^1}$.

The above considerations justifies the following notation: for $f \in L^1$ we set
$$
	\dltn_0 f \df \mean_0 f\quad\mbox{and}\quad\dltn_k f \df \mean_k f-\mean_{k-1} f,\; k>0.
$$
We also introduce a collection of dyadic intervals in $\mathbb{Z}_+$:
$$
	\delta_0 \df \{0\}\quad\mbox{and}\quad \delta_k \df [2^{k-1}, 2^{k} - 1],\;k>0.
$$
The following fact shows the connection between dyadic martingales and the Walsh system.
\begin{Rem}\label{MartViaWalsh} 
For any $f \in L^1$, we have
$$
	\mean_k f = \sum_{n = 0}^{2^k - 1} (f, w_n)\, w_n \quad\mbox{and}\quad \dltn_k f = \sum_{n\in \delta_k} (f, w_n)\, w_n.
$$
\end{Rem}
This simple and well-known fact follows, for example, from arguments in \cite[IV.5]{KaSa}.

\subsection{Operators on martingales} Consider the space of \emph{simple} martingales (we say that a martingale~$\mathcal{M}= \{M_k\}$ is simple if $M_k = M_{k+1}$ for all sufficiently large~$k$).  
We suppose it consists of martingales that are either all scalar-valued or all $l^2$-va\-lu\-ed.
Let~$T$ be an operator (not necessarily linear) that is defined on this space and transforms martingales into scalar-valued measurable functions. Suppose it satisfies the following conditions:
\begin{enumerate}
	\item[(a)] $|T(\mathcal{M}_1+\mathcal{M}_2)| \le C_1(|T\mathcal{M}_1|+|T\mathcal{M}_2|)$;
	\item[(b)] $\|T\mathcal{M}\|_{L^2} \le C_2 \|\mathcal{M}\|_{L^2}$;
	\item[(c)] if a martingale $\mathcal{M} = \{M_k\}\Cii{k \in \mathbb{Z}_+}$ satisfies the relations $M_0 \equiv 0$ and 
	$$\dltn_k\mathcal{M} = \mathds{1}_{e_k}\dltn_k\mathcal{M}\quad\mbox{for}\quad k>0,$$ where $e_k \in \mathcal{F}_{k-1}$, then
	$$\big\{|T\mathcal{M}|>0\big\} \subset \bigcup_{k = 1}^{\infty} e_k.$$
\end{enumerate}
For such an operator we can state the following theorem (it was proved for sca\-lar-va\-lu\-ed martingales in~\cite{Gu} and was modified for vector-valued martingales in~\cite{Ki}).
\begin{Th}\label{OperOnMart}
	If an operator~$T$ satisfies conditions~\textup{(a), (b),} and~\textup{(c),} then for simple martingales~$\mathcal{M}$ we have the weak type $(1,1)$ estimate\textup:
	$$
		\big|\big\{|T\mathcal{M}|>\lambda\big\}\big| \le \mathrm{const}\,{\lambda}^{-1}{\|\mathcal{M}\|\Cii{L^1}} \quad\mbox{for}\quad \lambda > 0,
	$$
	where the constant depends only on $C_1$ and $C_2$.
\end{Th}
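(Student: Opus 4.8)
The plan is to run a Calder\'on--Zygmund (Gundy-type) decomposition adapted to the dyadic filtration and then dispose of the two resulting pieces using, respectively, the $L^2$-bound~(b) together with Chebyshev's inequality, and the support property~(c). First I would reduce to the case $\mathcal{M} = \{\mean_k f\}$ for a single function $f$: a simple martingale is eventually constant, say $M_k = M_N$ for $k\ge N$, so setting $f \df M_N$ we get $M_k = \mean_k f$ for all $k$, $\|\mathcal{M}\|_{L^1} = \|f\|_{L^1}$, and $f$ is $\mathcal{F}_N$-measurable. Fix $\lambda>0$; we may assume $\lambda > \|f\|_{L^1}$, since otherwise the asserted bound exceeds $1$ and is trivial. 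In the $l^2$-valued case the argument is verbatim the same, with $|\cdot|$ denoting the $l^2$-norm throughout.

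Next comes the decomposition. Let $\{Q_j\}$ be the maximal dyadic subintervals of $[0,1]$ on which the average of $|f|$ exceeds $\lambda$, and put $E \df \bigcup_j Q_j$. These are disjoint, each $Q_j$ has a generation $m_j$ (so $|Q_j| = 2^{-m_j}$ and $Q_j\in\mathcal{F}_{m_j}$, with $m_j\le N$ since $f$ is $\mathcal{F}_N$-measurable), and by maximality $|E|\le\lambda^{-1}\|f\|_{L^1}$ while $\tfrac{1}{|Q_j|}\int_{Q_j}|f|\le 2\lambda$. Write $f = g+b$, where $g$ equals $\tfrac{1}{|Q_j|}\int_{Q_j}f$ on each $Q_j$ and $g=f$ off $E$, and $b=\sum_j b_j$ with $b_j\df\big(f-\tfrac{1}{|Q_j|}\int_{Q_j}f\big)\mathds{1}_{Q_j}$. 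Then $\|g\|_{L^\infty}\le 2\lambda$ and $\|g\|_{L^1}\le\|f\|_{L^1}$, so $\|g\|_{L^2}^2\le 2\lambda\|f\|_{L^1}$; each $b_j$ is supported on $Q_j$ and has mean zero. Both $g$ and $b$ are $\mathcal{F}_N$-measurable, hence $\mathcal{G}\df\{\mean_k g\}$ and $\mathcal{B}\df\{\mean_k b\}$ are \emph{simple} martingales with $\mathcal{M}=\mathcal{G}+\mathcal{B}$, so conditions~(a),~(b),~(c) apply to them.

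For the good part, (b) and $\|\mathcal{G}\|_{L^2}=\|g\|_{L^2}$ give $\|T\mathcal{G}\|_{L^2}^2\le 2C_2^2\lambda\|f\|_{L^1}$, whence Chebyshev's inequality bounds $|\{|T\mathcal{G}|>\lambda/2C_1\}|$ by a constant (depending on $C_1,C_2$) times $\lambda^{-1}\|f\|_{L^1}$. The crux is the bad part, and this is exactly the situation condition~(c) is built for: the obstacle is to verify that $\mathcal{B}$ meets its hypotheses. Since each $b_j$ has mean zero and lives on $Q_j\in\mathcal{F}_{m_j}$, one checks that $\mean_k b_j=0$ for $k\le m_j$ and that $\mean_k b_j$ is supported on $Q_j$ for $k>m_j$; consequently $M_0=\mean_0 b=\int_0^1 b=0$ and $\dltn_k\mathcal{B}$ is supported on $e_k\df\bigcup_{j:\,m_j\le k-1}Q_j$, a union of dyadic intervals of generation at most $k-1$, so that $e_k\in\mathcal{F}_{k-1}$ and $\dltn_k\mathcal{B}=\mathds{1}_{e_k}\dltn_k\mathcal{B}$. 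Thus~(c) yields $\{|T\mathcal{B}|>0\}\subset\bigcup_k e_k=E$, an event of measure at most $\lambda^{-1}\|f\|_{L^1}$. Finally, (a) gives $|T\mathcal{M}|\le C_1(|T\mathcal{G}|+|T\mathcal{B}|)$, so $\{|T\mathcal{M}|>\lambda\}\subset\{|T\mathcal{G}|>\lambda/2C_1\}\cup\{|T\mathcal{B}|>0\}$; summing the two estimates delivers the weak type $(1,1)$ bound with a constant depending only on $C_1$ and $C_2$.
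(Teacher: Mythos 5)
Your proof is correct, but there is nothing in the paper to compare it against: the paper does not prove Theorem~\ref{OperOnMart} at all, it quotes it as known, citing Gundy~\cite{Gu} for scalar-valued martingales and Kislyakov~\cite{Ki} for the vector-valued modification. What you have written is, in essence, a self-contained reconstruction of the classical argument behind those references: a Calder\'on--Zygmund stopping-time decomposition $f=g+b$ at height $\lambda$, with the good part killed by hypothesis~(b) plus Chebyshev and the bad part killed by hypothesis~(c) --- which is indeed tailor-made for exactly the sets $e_k=\bigcup_{j\colon m_j\le k-1}Q_j$ you exhibit. The details that are easy to fumble are all handled: you check that $g$ and $b$ are $\mathcal{F}_N$-measurable so that $\mathcal{G}$ and $\mathcal{B}$ are again \emph{simple} martingales (necessary, since $T$ is only defined on simple martingales); you verify both hypotheses of~(c) for $\mathcal{B}$, including $e_k\in\mathcal{F}_{k-1}$ rather than merely $e_k\in\mathcal{F}_k$; and you dispose of the range $\lambda\le\|f\|_{L^1}$ where the Calder\'on--Zygmund decomposition would be vacuous. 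One point worth appreciating: your two-piece decomposition suffices only because the dyadic filtration is regular --- the parent of a maximal interval $Q_j$ has just twice its measure, which is what gives the stopped averages the bound $2\lambda$ and hence $\|g\|_{L^\infty}\le 2\lambda$. Gundy's theorem in the generality in which \cite{Ki} uses it (arbitrary Banach-space-valued martingales over a general filtration, and a weaker substitute for~(b)) cannot be proved this way; there one needs the full three-part Gundy decomposition, whose third piece is controlled in the $L^1$-norm of its martingale differences. So your argument proves precisely the dyadic case stated in the paper --- which is all the paper needs --- by a route that is genuinely simpler than what the cited references establish.
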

Note that it is presented in greater generality in~\cite{Ki}: martingales are $X$-valued (where $X$ is an arbitrary Banach space), they are not supposed to be dyadic, and a weaker condition is imposed instead of condition~(b). 

\section{Auxiliary lemmas}
Here we prove some auxiliary propositions. We start with a lemma that describes how the operation $\dotplus$ transforms intervals in~$\mathbb{Z}_+$.
\begin{Le}\label{intervals}
Let $N$ be some number in $\mathbb{Z}_+$. Consider its dyadic decomposition\textup: $N = 2^{k_1} + \cdots + 2^{k_s}$\textup, where 
$k_1 >k_2>\cdots> k_s \ge 0$. Also we introduce the collection
$$
	\{\varkappa_j\}_{j=1}^{\infty} \df \mathbb{Z}_+ \setminus \{k_i\}_{i=1}^{s}
$$
ordered by ascending\textup: $\varkappa_1 < \varkappa_2 < \cdots < \varkappa_j < \cdots$.
Then
\begin{equation}\label{ZTransf}
	[0,N-1]\dotplus N = \bigcup_{i =1}^s \delta_{k_i+1}\quad\mbox{and}\quad 
	[N, +\infty) \dotplus N = \delta_0\cup\Big(\bigcup_{j =1}^\infty \delta_{\varkappa_j+1}\Big).
\end{equation}
More precisely\textup, we have
\begin{equation}\label{shifts}
\begin{aligned}[]	
	[0,\, 2^{k_1}-1] & \dotplus N = \delta_{k_1+1}; \\
	[2^{k_1},\, 2^{k_1}+2^{k_2}-1] & \dotplus N = \delta_{k_2+1}; \\
	&\hskip4.5pt\vdots \\
	\big[\,{\textstyle\sum_{l=1}^{s-1} 2^{k_l}},\, N -1\big] & \dotplus N = \delta_{k_s+1};\\
	\{N\} & \dotplus N = \delta_0; \\
	[N+1,\, N + 2^{\varkappa_1}] & \dotplus N = \delta_{\varkappa_1+1};\\
	&\hskip4.5pt\vdots \\
	\big[N +  {\textstyle \sum_{l=1}^{j-1} 2^{\varkappa_l}} + 1,\, N + {\textstyle \sum_{l=1}^{j} 2^{\varkappa_l}}\big] & \dotplus N = \delta_{\varkappa_j+1};\\
	&\hskip4.5pt\vdots
\end{aligned}
\end{equation}
\end{Le}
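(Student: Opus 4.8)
The plan is to reduce everything to a single digit-level observation about $\dotplus$ and then read off the two displays in~\eqref{shifts} as a disjoint decomposition organized by the highest digit in which an argument differs from~$N$.

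First I would record the key fact. For $a\ne N$, let $k$ be the largest index with $\theta_k(a)\ne\theta_k(N)$. Then in $a\dotplus N$ all digits above position~$k$ cancel (they agree in $a$ and $N$), while the digit at position~$k$ equals~$1$; hence $a\dotplus N\in[2^k,2^{k+1}-1]=\delta_{k+1}$. Moreover the sign of $a-N$ is governed by this top differing digit: $a<N$ forces $\theta_k(N)=1$, i.e. $k\in\{k_i\}$, whereas $a>N$ forces $\theta_k(N)=0$, i.e. $k\in\{\varkappa_j\}$ (and $a=N$ gives $a\dotplus N=0\in\delta_0$ by Fact~\ref{ZIsGroup}). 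This already explains the coarse statement~\eqref{ZTransf}: arguments below~$N$ land in the blocks $\delta_{k_i+1}$ and arguments above~$N$ land in the blocks $\delta_{\varkappa_j+1}$.

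Next, for each admissible top index I would identify the set of arguments producing it. Fixing $k=k_i$ and requiring the top differing digit to be \emph{exactly} $k_i$ means that $a$ agrees with $N$ above position~$k_i$ (contributing $\sum_{l<i}2^{k_l}$), has digit~$0$ at $k_i$, and is free below~$k_i$; this is precisely the interval $[\sum_{l=1}^{i-1}2^{k_l},\,\sum_{l=1}^{i}2^{k_l}-1]$, which is the $i$-th line of~\eqref{shifts}. Symmetrically, fixing $k=\varkappa_j$ gives those $a$ that agree with $N$ above $\varkappa_j$, carry digit~$1$ at $\varkappa_j$, and are free below, again a block of $2^{\varkappa_j}$ consecutive integers. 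The one computation that needs care, and the only real bookkeeping I expect, is matching this last block with the interval $[N+\sum_{l=1}^{j-1}2^{\varkappa_l}+1,\,N+\sum_{l=1}^{j}2^{\varkappa_l}]$ written in~\eqref{shifts}: its left endpoint is $\sum_{k_i>\varkappa_j}2^{k_i}+2^{\varkappa_j}$, and to recognize this as $N+\sum_{l<j}2^{\varkappa_l}+1$ I would use that the digits of~$N$ below~$\varkappa_j$ split into the ones (at the $k_i<\varkappa_j$) and the zeros (at $\varkappa_1,\dots,\varkappa_{j-1}$), so that $\sum_{k_i<\varkappa_j}2^{k_i}+\sum_{l=1}^{j-1}2^{\varkappa_l}=2^{\varkappa_j}-1$; substituting this identity collapses the two expressions, and the right endpoint follows from the length count.

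Finally I would close by a counting remark rather than a direct surjectivity check. Since $a\mapsto a\dotplus N$ is an involution (Fact~\ref{ZIsGroup}), it is injective on each interval above; each such interval has the same cardinality as its target block $\delta_{k+1}$ (both equal $2^k$) and maps into it by the first paragraph, hence maps \emph{onto} it. This yields every line of~\eqref{shifts}, and taking the union over $i$ and over $j$ gives~\eqref{ZTransf}. No deep difficulty is anticipated; the content is entirely the highest-differing-digit principle, with the endpoint identity for the $a>N$ blocks being the only place requiring attention.
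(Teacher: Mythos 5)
Your proof is correct. It is organized differently from the paper's: you classify every $a\in\mathbb{Z}_+$ by the highest binary position $k$ at which $a$ differs from $N$, note that this forces $a\dotplus N\in\delta_{k+1}$ and that the sign of $a-N$ is read off from $\theta_k(N)$, and then identify each class with the corresponding interval of~\eqref{shifts}, the only real computation being the endpoint identity $\sum_{k_l<\varkappa_j}2^{k_l}+\sum_{l=1}^{j-1}2^{\varkappa_l}=2^{\varkappa_j}-1$. The paper instead works interval by interval inside the group formalism: for the $i$-th interval it writes each element as $\sigma\dotplus v$ with $\sigma=\sum_{l=1}^{i-1}2^{k_l}$ and $0\le v\le 2^{k_i}-1$, decomposes $N=\sigma\dotplus 2^{k_i}\dotplus\gamma$, reduces the image to $[0,2^{k_i}-1]\dotplus 2^{k_i}\dotplus\gamma$, and uses that $\dotplus\gamma$ permutes $[0,2^{k_i}-1]$ when $\gamma<2^{k_i}$ (with an analogous computation for the intervals $U_j$ to the right of $N$). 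Both arguments close with the identical counting step---an injective map (injectivity coming from Fact~\ref{ZIsGroup}) from a set of $2^k$ elements into the block $\delta_{k+1}$ of the same cardinality must be onto---so the essential mechanism is shared. What your classification buys is a uniform treatment of the two halves $[0,N-1]$ and $[N,+\infty)$ and an immediate derivation of the coarse statement~\eqref{ZTransf} before any endpoint bookkeeping; what the paper's route buys is that it never needs to speak of individual binary digits of $a\dotplus N$, staying entirely within manipulations of the operation $\dotplus$.
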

\begin{proof}
	It is worth noting that the first of identities~\eqref{ZTransf} and its proof can be found in \cite[IV.5]{KaSa}. The corresponding identities for the intervals 
	(the first $s$ identities in~\eqref{shifts}) can also be derived from that proof. 
	
	Here we provide a complete proof of the lemma.	
	Consider the set
	\begin{equation*}
		Q_i \df\big[{\textstyle\sum_{l=1}^{i - 1}2^{k_l}},\,{\textstyle\sum_{l=1}^{i}2^{k_l}} - 1 \big]\dotplus N,\qquad 1\le i \le s.
	\end{equation*}
	We denote $$\sigma \df \sum_{l=1}^{i-1}2^{k_l}\quad\mbox{and}\quad \gamma \df \!\sum_{l=i+1}^{s}\!2^{k_l}.$$
	By Definition~\ref{DefOfPlus} and Fact~\ref{ZIsGroup}, we have
	$$
		Q_i = \big\{(\sigma \dotplus v)\dotplus (\sigma\dotplus 2^{k_i}\dotplus\gamma)\big\}_{v = 0}^{2^{k_i} -1} = [0,\, 2^{k_{i}} - 1] \dotplus 2^{k_i}\dotplus \gamma.
	$$
	Fact~\ref{ZIsGroup} implies that the set $[0,\, 2^{k_{i}} - 1] \dotplus \gamma$ consists of $2^{k_{i}}$ numbers. On the other hand, all 
	these numbers are lesser than $2^{k_{i}}$ because $\gamma < 2^{k_{i}}$ (see Definition~\ref{DefOfPlus} again). Thus, we obtain $[0,\, 2^{k_{i}} - 1] \dotplus \gamma = [0,\, 2^{k_{i}} - 1]$. Finally, we have
	$$
		Q_i = [2^{k_i},\, 2^{k_i + 1} - 1] = \delta_{k_i+1}.
	$$
	
	Next, we consider the set
	$$
		U_{j} \df \big[N +  {\textstyle \sum_{l=1}^{j-1} 2^{\varkappa_l}} + 1,\, N + {\textstyle \sum_{l=1}^{j} 2^{\varkappa_l}}\big]  \dotplus N,\qquad j \ge 1.
	$$
	We denote
	$$
		\mu \, \df \!\!\sum_{l \colon \! k_l > \varkappa_j} \!\! 2^{k_l} \quad\mbox{and}\quad \eta \, \df \!\!\sum_{l \colon \! k_l < \varkappa_j} \!\! 2^{k_l}.
	$$
	By the definition of the sequence~$\{\varkappa_j\}$, we have
	$$
		U_{j} = \big[2^{\varkappa_j} +  \mu,\, {\textstyle\sum_{k=0}^{\varkappa_j}2^{k}}+\mu \big]  \dotplus \mu \dotplus \eta
	$$
	We note that 
	$$
		\sum_{k=0}^{\varkappa_j}2^{k} = 2^{\varkappa_j+1}-1
	$$ 
	and that for any integer $v$ such that $2^{\varkappa_j} \le v \le 2^{\varkappa_j+1}-1$, we have $v + \mu = v\dotplus \mu$. Thus, we can see that
	$$
		U_{j} = [2^{\varkappa_j},\, 2^{\varkappa_j+1}-1] \dotplus \eta.
	$$
	This implies that $U_{j}$ consists of $2^{\varkappa_j}$ integers that are not less than $2^{\varkappa_j}$, but are less than $2^{\varkappa_j+1}$.
	Therefore, we have
	\[
		U_{j} = [2^{\varkappa_j},\, 2^{\varkappa_j+1}-1] = \delta_{\varkappa_j+1}.\qedhere
	\]

	%
\end{proof}

Now we consider two auxiliary operators and obtain their $L^p$-boundedness as a consequence of Theorem~\ref{OperOnMart}.
\begin{Le}\label{aboutS} 
	Suppose multi-index $(j,k)$ runs over some subset $\mathcal{A} \subset \mathbb{Z}_+^2$.
	Consider a collection of numbers $\{a_{j,k}\}_{(j,k) \in \mathcal{A}}$ in $\mathbb{Z}_+$ such that 
	$\{a_{j,k} \dotplus \delta_k\}_{(j,k) \in \mathcal{A}}$ is a collection of mutually disjoint subsets in~$\mathbb{Z}_+$.
	Let $h = \{h_{j,k}\}_{(j,k) \in \mathbb{Z}_+^2}$ be a function in $L^p(l^2)$\textup, $1<p\le 2$.
	Suppose the operator~$S$ is defined by the formula
	$$
		Gh \df \!\!\sum_{(j,k) \in \mathcal{A}}\!\! w_{a_{j,k}}  \dltn_k h_{j,k}.
	$$
	Then we have
	\begin{equation}\label{GIsBounded}
		\|Gh\|\Cii{L^p} \le C_p \|h\|\Cii{L^p(l^2)},
	\end{equation}
	where the constant~$C_p$ depends only on~$p$.
\end{Le}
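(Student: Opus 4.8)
The plan is to realize $G$ as an operator on $l^2$-valued dyadic martingales and then invoke Theorem~\ref{OperOnMart}. Given an $l^2$-valued simple martingale $\mathcal{N} = \{N_n\}$ whose components are indexed by $(j,k)\in\mathbb{Z}_+^2$, I define
$$
	T\mathcal{N} \df \sum_{(j,k)\in\mathcal{A}} w_{a_{j,k}}\,(\dltn_k\mathcal{N})^{(j,k)},
$$
where $(\dltn_k\mathcal{N})^{(j,k)}$ denotes the $(j,k)$-th coordinate of the $k$-th martingale difference. For the martingale $\mathcal{M} = \{\mean_n h\}$ generated by $h=\{h_{j,k}\}$ one has $(\dltn_k\mathcal{M})^{(j,k)} = \dltn_k h_{j,k}$, so $T\mathcal{M} = Gh$, and it suffices to bound $T$. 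For simple martingales the defining sum is finite, so $T$ is well defined there, and the reduction reaches a weak type $(1,1)$ estimate via Theorem~\ref{OperOnMart}.

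The next step is to verify conditions (a)--(c). Condition~(a) holds with $C_1 = 1$ because $T$ is linear. For condition~(b) I would use the disjointness hypothesis: by Remark~\ref{MartViaWalsh} the function $(\dltn_k\mathcal{N})^{(j,k)}$ has Walsh spectrum contained in $\delta_k$, and by Remark~\ref{WIsGroup} multiplication by $w_{a_{j,k}}$ shifts this spectrum to $a_{j,k}\dotplus\delta_k$. Since the sets $a_{j,k}\dotplus\delta_k$ are mutually disjoint, the summands are pairwise orthogonal in $L^2$, so that
$$
	\|T\mathcal{N}\|_{L^2}^2 = \sum_{(j,k)\in\mathcal{A}} \big\|(\dltn_k\mathcal{N})^{(j,k)}\big\|_{L^2}^2 \le \sum_{n}\sum_{(j,k)}\big\|(\dltn_n\mathcal{N})^{(j,k)}\big\|_{L^2}^2 = \|\mathcal{N}\|_{L^2}^2,
$$
where I used $|w_{a_{j,k}}| = 1$, martingale orthogonality, and the fact that each diagonal term $n=k$ occurs once in the full martingale energy. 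This gives $C_2 = 1$.

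For condition~(c), suppose $N_0\equiv 0$ and $\dltn_n\mathcal{N} = \mathds{1}_{e_n}\dltn_n\mathcal{N}$ with $e_n\in\mathcal{F}_{n-1}$. Passing to the $(j,k)$-th coordinate at level $n=k$ shows that $(\dltn_k\mathcal{N})^{(j,k)}$ is supported on $e_k$, while the terms with $k=0$ vanish because $\dltn_0\mathcal{N}=N_0\equiv 0$. Since multiplication by $w_{a_{j,k}}$ (nonzero a.e.) does not enlarge the support, every summand of $T\mathcal{N}$ lives on the corresponding $e_k$, and therefore $\{|T\mathcal{N}|>0\}\subset\bigcup_{k>0}e_k$. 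This is the step where the precise matching between the second index $k$ of the multi-index and the martingale level is essential, and I expect getting the formulation of $T$ exactly right here to be the main obstacle.

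Finally, Theorem~\ref{OperOnMart} yields $\big|\{|Gh|>\lambda\}\big|\le \mathrm{const}\,\lambda^{-1}\|h\|_{L^1(l^2)}$ for $h$ generating a simple martingale. Combining this weak type $(1,1)$ bound with the strong $(2,2)$ bound from condition~(b) and applying the Marcinkiewicz interpolation theorem gives the strong $(p,p)$ estimate~\eqref{GIsBounded} for $1<p<2$; the vector-valued nature of the domain $L^p(l^2)$ causes no difficulty, since the interpolation argument sees $h$ only through $\|h(\cdot)\|_{l^2}$. The case $p=2$ is already contained in condition~(b), and a routine density argument removes the restriction to simple martingales.
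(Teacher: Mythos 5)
Your proposal is correct and follows essentially the same route as the paper's own proof: $L^2$-boundedness of $G$ via disjointness of the shifted Walsh spectra $a_{j,k}\dotplus\delta_k$, verification of conditions (a)--(c) so that Theorem~\ref{OperOnMart} gives the weak type $(1,1)$ bound for simple martingales, and then Marcinkiewicz interpolation plus a limiting argument (your explicit formulation of $T$ on $l^2$-valued martingales and your check of condition (c) merely spell out what the paper leaves implicit). The one small slip is the claim that the defining sum is finite for simple martingales --- when $\mathcal{A}$ is infinite it need not be, which is exactly why the paper first assumes $\mathcal{A}$ finite and removes that assumption by a second passage to the limit; in your setup the sum still converges in $L^2$ by orthogonality, so the slip is harmless but worth fixing.
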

\begin{proof}
	We recall that the Walsh system is an orthonormal basis in $L^2$. Using Parseval's identity together with Facts~\ref{WIsGroup} and~\ref{MartViaWalsh}, 
	we can prove the $L^2$-bo\-un\-ded\-ness of $G$. Indeed, since the sets $a_{j,k} \dotplus \delta_k$ are pairwise disjoint for 
	$(j,k) \in \mathcal{A}$, we have
	$$
		\|Gh\|_{L^2}^2 = \!\!\sum_{(j,k) \in \mathcal{A}}\!\! \|\dltn_k h_{j,k}\|_{L^2}^2 \le \!\!\sum_{(j,k) \in \mathbb{Z}_+^2}\!\! \|h_{j,k}\|_{L^2}^2 = \|h\|_{L^2(l^2)}^2.
	$$
	Since the operator~$G$ is linear and satisfies conditions~(b) and~(c), Theorem~\ref{OperOnMart} implies the weak type (1,1) estimate for $G\mathcal{M}$ if $\mathcal{M}$ is a simple martingale.
	
	Suppose for a while that the set $\mathcal{A}$ is finite. Then, passing to the limit we obtain the weak type (1,1) estimate for $Gh$, where $h$ is any function in $L^1(l^2)$.
	By the Marcinkiewicz interpolation theorem (see, for example, \cite[I.4]{St}), we obtain estimate~\eqref{GIsBounded}. 
	Passing to the limit one more time, 
	we lift the assumption about the finiteness of~$\mathcal{A}$.
\end{proof}

\begin{Le}\label{aboutR}
	Let $h$ be a function in $L^p$ or in $L^p(l^2)$\textup, $1<p\le 2$. Consider the operator~$S$ defined by the formula
	$$
		Sh \df \Big(\sum_{k=0}^{\infty} |\dltn_k h|^2\Big)^{1/2}.
	$$
	Then we have
	$$
		\|Sh\|\Cii{L^p} \le C_p \|h\|\Cii{L^p}.
	$$
\end{Le}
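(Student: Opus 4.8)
The plan is to recognize $S$ as the martingale square function and to deduce its $L^p$-boundedness from Theorem~\ref{OperOnMart}, exactly as was done for the operator $G$ in Lemma~\ref{aboutS}. To this end I regard $S$ as acting on a simple martingale $\mathcal{M} = \{M_k\}$ by the formula $S\mathcal{M} = \big(\sum_k |\dltn_k\mathcal{M}|^2\big)^{1/2}$; when $\mathcal{M} = \{\mean_k h\}$ this coincides with $Sh$, since $\dltn_k\mathcal{M} = \dltn_k h$. Note that $S$ is not linear, but Theorem~\ref{OperOnMart} permits this, and I will verify that $S$ satisfies conditions~(a), (b), and~(c).

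Condition~(a) is immediate with $C_1 = 1$: since each $\dltn_k$ is linear, the triangle inequality for the $l^2$-norm applied pointwise gives $S(\mathcal{M}_1 + \mathcal{M}_2) \le S\mathcal{M}_1 + S\mathcal{M}_2$. For condition~(b) I use the $L^2$-orthogonality of the martingale differences (which, in the scalar case, is precisely Parseval's identity together with Fact~\ref{MartViaWalsh}, the blocks $\delta_k$ being disjoint); by Fubini,
$$
	\|S\mathcal{M}\|_{L^2}^2 = \sum_k \|\dltn_k\mathcal{M}\|_{L^2}^2 = \|\mathcal{M}\|_{L^2}^2,
$$
so~(b) holds with $C_2 = 1$. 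In the $l^2$-valued case this is the same computation performed in $L^2(l^2)$.

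For condition~(c), suppose $M_0 \equiv 0$ and $\dltn_k\mathcal{M} = \mathds{1}_{e_k}\dltn_k\mathcal{M}$ for $k > 0$, with $e_k \in \mathcal{F}_{k-1}$. Then $\dltn_0\mathcal{M} = M_0 = 0$, and each $\dltn_k\mathcal{M}$ vanishes outside $e_k$; hence at every point $x \notin \bigcup_{k \ge 1} e_k$ all the terms in the sum defining $S\mathcal{M}$ vanish, so $\{|S\mathcal{M}| > 0\} \subset \bigcup_k e_k$. Thus all three hypotheses of Theorem~\ref{OperOnMart} are met, and I obtain the weak type $(1,1)$ estimate for $S\mathcal{M}$ on simple martingales.

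It remains to interpolate and remove the simplicity assumption. Passing to the limit yields the weak type $(1,1)$ bound for $Sh$ with $h \in L^1$ (respectively $L^1(l^2)$); combining this with the strong $L^2$-bound from~(b), the Marcinkiewicz interpolation theorem (see \cite[I.4]{St}) gives $\|Sh\|_{L^p} \le C_p\|h\|_{L^p}$ for $1 < p \le 2$. Finally, for general $h \in L^p$ one approximates by the simple martingales $\mean_N h$: since $S(\mean_N h) = \big(\sum_{k=0}^{N}|\dltn_k h|^2\big)^{1/2}$ increases monotonically to $Sh$, the monotone convergence theorem together with $\|\mean_N h\|_{L^p} \le \|h\|_{L^p}$ finishes the argument. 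I do not expect any genuine obstacle: the entire content is the reduction to Theorem~\ref{OperOnMart}, and the only point deserving a moment's care is condition~(b), which rests on the $L^2$-orthogonality of the martingale differences.
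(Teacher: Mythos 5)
Your proposal is correct and takes essentially the same route as the paper: the paper disposes of Lemma~\ref{aboutR} in one line, noting that it follows from Theorem~\ref{OperOnMart} by ``the same arguments as in the proof of Lemma~\ref{aboutS},'' and your verification of conditions (a)--(c) for the square function, followed by the weak type $(1,1)$ bound, Marcinkiewicz interpolation, and the monotone-convergence limiting argument, is exactly that argument spelled out in detail.
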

This estimate is well known for scalar-valued functions (moreover, in~\cite{Bu} it is proved that $\|Sh\|_{L^p} \asymp \|h\|_{L^p}$, $1 < p < \infty$).
As for our situation, Lemma~\ref{aboutR} is a simple consequence of Theorem~\ref{OperOnMart} (the arguments are the same as in the proof of Lemma~\ref{aboutS}). 

Also we will need the following simple fact.
\begin{Rem}\label{merg}
Let $\{h_i\}$ and $\{v_j\}$ be sequences in $L^p(l^2)$, $1\le p \le 2$. Then
$$
	\big\|\{h_i\}\big\|_{L^p(l^2)} + \big\|\{v_j\}\big\|_{L^p(l^2)} \le \sqrt{2}\, \big\|\{h_i\} \cup \{v_j\}\big\|_{L^p(l^2)}
$$
\end{Rem}
\begin{proof}
This fact follows from the concavity of the functions $x^{1/p}$ and $x^{p/2}$ for $x \ge 0$, i.e., we need to apply the inequality
$
	\tfrac{x^q + y^q}{2} \le \big(\tfrac{x+y}{2}\big)^{q}
$
twice: with $q= 1/p$ and $q= p/2$.
\end{proof}

\section{Proof of Theorem~\ref{th1}}
Let $I = [a,b) = [a, b - 1]$ be some interval in $\mathbb{Z}_+$. We consider the dyadic decomposition of its left end: $a = 2^{k_1} + \cdots + 2^{k_s}$, where $k_1 >k_2>\cdots> k_s \ge 0$. Also we consider the collection
$
	\{\varkappa_j\}_{j=1}^{\infty} = \mathbb{Z}_+ \setminus \{k_i\}_{i=1}^{s}
$
ordered by ascending: $\varkappa_1 < \varkappa_2 < \cdots < \varkappa_j < \cdots$.
We split the right-unbounded interval $[a, +\infty)$ into pairwise disjoint subintervals as follows:
$$
	[a, +\infty) = \{a\} \cup \bigcup_{j=1}^{\infty} J_{j},
$$
where
$$
	J_{j} \df \big[a +  {\textstyle \sum_{l=1}^{j-1} 2^{\varkappa_{l}}} + 1,\, a + {\textstyle \sum_{l=1}^{j} 2^{\varkappa_{l}}}\big].
$$
By $q$ we denote the index such that $J_{q} \cap I \ne \emptyset$ and $J_{q+1} \cap I = \emptyset$.

Next, we consider the dyadic decomposition of $b$: $b = 2^{\tilde k_1} + \cdots + 2^{\tilde k_{r}}$, 
where $\tilde k_1 > \tilde k_2>\cdots> \tilde k_{r} \ge 0$, and split the interval $[0, b - 1]$ into pairwise disjoint subintervals:
$$
	[0, b - 1] = \bigcup_{i=1}^{r} \tilde J_{i}, 
$$
where
$$
	\tilde J_{i} \df \big[{\textstyle \sum_{l=1}^{i-1} 2^{\tilde k_{l}}},\,{\textstyle \sum_{l=1}^{i} 2^{\tilde k_{l}}} - 1\big].
$$
From the collection $\{\tilde k_i\}_{i=1}^{r}$, we choose the exponent~$\tilde k_{\rho}$ such that 
$\theta_{\tilde k_{\rho}}(a) = 0$ and $\theta_k(a) = \theta_k(b)$ for $k > \tilde k_{\rho}$. Note that $\theta_{\tilde k_{\rho}}(b) = 1$.

Now we prove the identity 
\begin{equation}\label{Partition}
	I = \{a\}\cup\Big(\bigcup_{j=1}^{q - 1} J_{j}\Big) \cup \Big(\bigcup_{i = \rho+1}^{r} \tilde J_{i}\Big)
\end{equation}
as well as the fact that all the intervals in this partition
are pairwise disjoint. For this, it suffices to show that
$
	J_{q} \cap I = \big[{\textstyle \sum_{l=1}^{\rho} 2^{\tilde k_{l}}},\, b-1\big],
$
or, what is the same, that
\begin{equation}\label{EndOfLastInterval}
	a +  {\sum_{l=1}^{q-1} 2^{\varkappa_{l}}} + 1 ={\sum_{l=1}^{\rho} 2^{\tilde k_{l}}}.
\end{equation}
The number $\tilde a \df a +  {\sum_{l=1}^{q-1} 2^{\varkappa_{l}}}$ is constructed from $a$ as follows: we fill ``empty'' lower binary digits of $a$ until we get the number that is smaller than
$b$, but that will become greater if we fill one more digit. So, since $\theta_{\tilde k_{\rho}}(b) = 1$, we have
$\theta_k(\tilde a) = 1$ for $k < \tilde k_{\rho}$, $\theta_{\tilde k_{\rho}}(\tilde a) = 0$, and $\theta_k(\tilde a) = \theta_k(a) = \theta_k(b)$ for $k > \tilde k_{\rho}$. 
This implies identity~\eqref{EndOfLastInterval}. Therefore, we have proved relation~\eqref{Partition} together with the fact that all the intervals in it are pairwise disjoint.

Now we apply the procedure just described to each interval $I_m = [a_m, b_m)$.
We assign the additional index~$m$ to all the objects arising from the application of this procedure to~$I_m$. 
Also we introduce the following notation:
\begin{gather*}
	f_{m,0} \df (f_m, w_{a_m})\, w_{a_m},\\
	f_{m,j} \df \sum_{n\in J_{m,j}} (f_m, w_n)\, w_n,\qquad\mbox{and}\qquad  
	\tilde f_{m,i} \df \sum_{n\in \tilde J_{m,i}} (f_m, w_n)\, w_n. \rule{0pt}{15pt}
\end{gather*}
Since the intervals in~\eqref{Partition} are pairwise disjoint, we have
\begin{equation}\label{fm}
	f_m =  f_{m,0}+ \sum_{j = 1}^{q_m - 1} \!\! f\Cii{m,j}  + 
	\!\!\sum_{i = \rho_m + 1}^{r_m} \!\! \tilde f\Cii{m,i}.
\end{equation}
Next, we set
$g_{m,j} \df w_{a_m} f_{m,j}$ and $\tilde g_{m,i} \df w_{b_m} \tilde f_{m,i}$.
Using Facts~\ref{ZIsGroup} and \ref{WIsGroup}, we can rewrite identity~\eqref{fm} as follows:
\begin{equation*}
	f_m \,=\, w_{a_m} \Big(g_{m,0} + \sum_{j = 1}^{q_m - 1} g_{m,j}\Big)  + 
	w_{b_m} \!\! \sum_{i = \rho_m + 1}^{r_m} \!\! \tilde g_{m,i}.
\end{equation*}
Therefore, by Lemma~\ref{intervals} and Fact~\ref{MartViaWalsh} we have
$$
	f_m \,=\, w_{a_m} \Big(\dltn_0 \, g_{m,0} + \sum_{j = 1}^{q_m - 1} \dltn_{1+\varkappa_{m,j}}\,  g_{m,j}\Big)  + 
	w_{b_m} \!\! \sum_{i = \rho_m + 1}^{r_m} \!\! \dltn_{1+\tilde k_{m,i}}\, \tilde g_{m,i}.
$$
This identity, together with Lemma~\ref{aboutS}, implies that
\begin{equation*}
	\Big\|\sum_m f_m\Big\|_{L^{p}} \le C_p\,\Big\|\Big(\sum_m\sum_{j = 0}^{q_m - 1}\! |g_{m,j}|^2  + 
	\sum_m\sum_{i = \rho_m + 1}^{r_m}  \!\!|\tilde g_{m,i}|^2\Big)^{1/2} \Big\|_{L^{p}} \\
\end{equation*}
Using the triangle inequality and applying Lemma~\ref{aboutR} to one of the terms, we conclude that the last expression is not greater than
\begin{equation}\label{ExprInProof}
	C_p'\,\Big\|\Big(\sum_m\sum_{j = 0}^{q_m - 1} \!|g_{m,j}|^2\Big)^{1/2} \Big\|_{L^{p}}  + 
	C_p'\,\bigg\|\bigg(\sum_m\Big|\sum_{i = \rho_m + 1}^{r_m}\!\!  \tilde g_{m,i}\,\Big|^2\bigg)^{1/2} \bigg\|_{L^{p}}.
\end{equation}
Next, we note that
$$
	g_{m,q_m} = w_{a_m}\,w_{b_m}\!\!\sum_{i = \rho_m + 1}^{r_m}\!\!  \tilde g_{m,i}.
$$
This identity and
Fact~\ref{merg} imply that expression~\eqref{ExprInProof} can be estimated by
$$
	C_p'\sqrt{2} \,\Big\|\Big(\sum_m\sum_{j =0}^{q_m} |g_{m,j}|^2\Big)^{1/2} \Big\|_{L^{p}} = C_p'\sqrt{2} \,\Big\|\Big(\sum_m\sum_{k =0}^{\infty} |\dltn_k g_m|^2\Big)^{1/2} \Big\|_{L^{p}},
$$
where $g_m \df w_{a_m} f_m$. Applying Lemma~\ref{aboutR} once again, we see that the last expression is not greater than
\[
\pushQED{\qed} 
	C_p''\sqrt{2} \,\big\|\{g_m\}\big\|_{L^p(l^2)} = C_p''\sqrt{2} \,\big\|\{f_m\}\big\|_{L^p(l^2)}.
	\qedhere
\popQED
\]

\section{Acknowledgments}
The author expresses his gratitude to S.~V.~Kislyakov for suggesting the problem and for helpful remarks about it. The author would also like to thank A.~L.~Volberg for a fruitful discussion on the Walsh system, martingales and the Littlewood--Paley inequality.

\end{document}